\documentclass[english,12pt]{article}

\usepackage{amsmath,amssymb,amsthm}
\usepackage{latexsym}
\usepackage[pdftex]{graphicx}
\usepackage{float}
\usepackage{indentfirst}
\usepackage{color}
\usepackage{ulem}


\setlength{\textwidth}{15.0cm}
\setlength{\textheight}{18.1cm}

\newtheorem{tm}{Theorem}

\newtheorem{kor}{Corollary}

\newtheorem{rem}{Remark}
\makeatother

\usepackage{babel}
\makeatother

\begin{document}

\vspace*{0.5cm}

\begin{center}
{\Large LDPC codes constructed from cubic symmetric graphs}
\end{center}

\vspace*{0.5cm}

\begin{center}

Dean Crnkovi\'c \\
({\it\small E-mail: deanc@math.uniri.hr})\\[3pt]
Sanja Rukavina \\
({\it\small E-mail: sanjar@math.uniri.hr})\\[3pt]
and \\[3pt]
Marina \v Simac\\ 
({\it\small E-mail: msimac@math.uniri.hr})\\[3pt]

\end{center}

\begin{center}
Department of Mathematics\\
 University of Rijeka\\
 Radmile Matej\v ci\'c 2, 51000 Rijeka, Croatia 
\par\end{center}



\vspace*{0.2cm}

\begin{abstract}
Low-density parity-check (LDPC) codes have been the subject of much interest due to the fact that they can perform near the Shannon limit.
In this paper we present a construction of LDPC codes from cubic symmetric graphs. The constructed codes are $(3,3)$-regular and the vast majority of the 
corresponding Tanner graphs have girth greater than four.
We analyse properties of the obtained codes and present bounds for the code parameters, the dimension and the minimum distance. 
Furthermore, we give an expression for the variance of the syndrome weight of the constructed codes. 
Information on the LDPC codes constructed from bipartite cubic symmetric graphs with less than 200 vertices is presented as well.
Some of the constructed codes are optimal, and some have an additional property of being self-orthogonal or linear codes with complementary dual (LCD codes).
\end{abstract}

\vspace*{0.2cm}

\textit{Keywords\/:} LDPC code, cubic graph, arc-transitive graph, bipartite graph\\

\textit{MSC $2020$ Codes\/:} 94B05, 05C99

\newpage

\section{Introduction and preliminaries}

We assume that the reader is familiar with the basic facts of graph theory and coding theory. 
We refer the reader to \cite{Balakrishnan, Diestel} and \cite{Pless} for related background materials on graphs and codes, respectively.
In this paper we consider only non-trivial finite connected graphs without loops and multiple edges.

A binary $[n,k]$ linear code $\mathcal{C}$ is a $k$-dimensional subspace of the vector space $\mathbb{F}_2^n$.   For $x,y \in \mathbb F_2^n$, the number $d(x,y)=|\{i : x_i\neq y_i, \ 1\leq i \leq n\}|$ 
is called the Hamming distance. The minimum distance of a code $\mathcal{C}$ is the number $d=min\{d(x,y):x,y \in \mathcal{C} \} $. Through the paper, a binary $[n,k]$ 
linear code with the minimum distance $d$ will be called an $[n,k,d]$ code. An optimal code is a code which achieves the theoretical upper bound for the minimum distance.

The codewords of an $[n,k,d]$ code satisfy $m\geq n-k$ parity-check equations. Every parity-check equation can be presented as a binary vector of length $n$ having $j$-th position equal to $1$ 
if  the corresponding codeword bit is included in that parity-check equation. The set of $m$ parity-check equations can be presented with an $m\times n $ parity-check matrix $H=\left[ h_{i,j} \right]$.
If $h_{i,j}=1$, then the $i$-th parity-check equation contains the $j$-th codeword bit. The rows of the parity-check matrix span the null space (or dual code) $\mathcal{C}^\perp$ of $\mathcal{C}$.

A binary low-density parity-check (LDPC) code is a binary linear code defined by a sparse parity-check matrix $H$, which means that $H$ contains a very small
number of nonzero entries. An LDPC code is called $(w_c, w_r)-$regular if $H$ has constant row sum $w_r$ and constant column sum $w_c$. 

An LDPC code can be presented using the Tanner graph, which gives a relation between parity-check equations and codeword bits. 
The Tanner graph is a bipartite graph that consists of two sets of vertices: bit nodes that correspond to codeword bits and check nodes that correspond to  parity-check equations. 
An edge connects a bit node to a check node if that bit is included in the corresponding parity-check equation. 
If an LDPC code is $(w_c, w_r)-$regular, then each bit node has degree $w_c$ and each check node has degree $w_r$.
A cycle in a graph is a sequence of edges that form a path in the graph such that the first node is equal to the last one. 
The length of a cycle is the number of edges in it, and the girth of a graph is the length of the shortest cycle. 
Since a Tanner graph is bipartite, the length of a cycle must be even and at least four. 

The decoding performance of an LDPC code depends on the structure of the corresponding Tanner graph. 
The existence of short cycles in the Tanner graph of a code establishes a correlation between iterations in the process of decoding, and thus, has a negative impact on the bit error rate 
(BER) performance of the code. The shorter the cycles are, the more significant the effect is. For this reason, the aim is to construct LDPC codes without short cycles, especially cycles of length four.

LDPC codes were first introduced by Gallager in the early 1960's (see \cite{Gallager}) and rediscovered by MacKay and Neal (see \cite{MacKay1}). These codes have been
the subject of much interest due to the fact that they can perform near the Shannon limit (see \cite{Gallager}). 
For some recent results on LDPC codes we refer the reader to \cite{Singh, Xu}.
Over the past years researchers have constructed LDPC codes that are free of cycles of length four using various structures, 
including graphs (see, e.g., \cite{geodetic, Rosenthal}). Regular bipartite graphs with large girth constructed in \cite{Lazebnik} were used in \cite{Kim} as Tanner graphs of LDPC codes.
In this paper we construct LDPC codes using bipartite cubic symmetric graphs as Tanner graphs of LDPC codes.

The paper is organized as follows: in Section \ref{section_cubic}, the construction of the LDPC codes using cubic symmetric graphs is introduced, and the results about the code parameters are presented. 
In Section \ref{section_variance}, an expression for the variance of a syndrome weight of the constructed LDPC codes is obtained.
In Section \ref{section_results}, computational results and constructed LDPC codes are presented.
Finally, in Section \ref{section_simulation} we give an example that illustrates the BER performance of the constructed codes over a binary symmetric channel.

\section{LDPC codes constructed from cubic symmetric graphs} \label{section_cubic}

Cubic graphs are 3-regular graphs, i.e. graphs in which all vertices have degree equal to three. A graph is symmetric if it is arc-transitive, i.e. if its automorphism group acts transitively on 
the set of arcs. 
Therefore, cubic symmetric graphs (CSGs) are 3-regular arc-transitive graphs.
CSGs were first studied by Foster in \cite{Foster}. They have since been the subject of much interest and study.
Conder and Nedela proved (see \cite{Conder}) that finite symmetric cubic graphs can be classified into 17 different families according to the arc-transitive actions they admit. 
The majority of CSGs are bipartite  and it is known that there exist exactly five connected CSGs with girth less than six (the complete graph $K_4$, the complete bipartite graph $K_{3,3}$, 
the cube, the Petersen graph and the dodecahedron).
In this paper we study LDPC codes having bipartite CSGs as the Tanner graphs. 

Let $ \mathcal G$ be a connected CSG with $2n$ vertices. Denote by $A$ its adjacency matrix. If $\mathcal G$ is a bipartite graph, its adjacency matrix can be written as follows:

	\begin{equation} \label{A}
	A=
	\left[\begin{array}{ c c } 
	0 & H \\
	H^T & 0 \\
		\end{array}  \right],  	\end{equation} where $H$ is an $n\times n$  matrix. \\
One can construct an LDPC code $\mathcal C( \mathcal G)$ by taking the  matrix $H$ as a parity-check matrix of the code. That is to say, $\mathcal G$ is the Tanner graph of the code 
$\mathcal C( \mathcal G)$. 
The density of the parity-check matrix is equal to $\displaystyle \frac 3 n$ and the obtained code is a $(3,3)$-regular LDPC code of length $n$ and dimension $n-rank_2(H)$, 
where $\displaystyle rank_2(H)=\frac{1}{2}rank_2(A)$.
Every arc-transitive graph without isolated vertices is vertex-transitive, so it is possible to obtain $H$ from $H^T$ by permuting the rows and columns.
Hence, the LDPC codes obtained from $H$ and $H^T$ are equivalent. Note that two binary codes are equivalent if and only if they are isomorphic.

If $\mathcal G$ is a non-bipartite CSG, its adjacency matrix $M$ determine a parity-check matrix of a $(3,3)$-regular LDPC code whose Tanner graph is a CSG having the 
adjacency matrix of the form (\ref{A}) with $H=M$. Hence, and according to the classification of CSGs, an LDPC code constructed from a non-bipartite CSG 
(taking its adjacency matrix as a parity-check matrix of the code) is isomorphic to the LDPC code obtained from some bipartite CSG  
(that is the Tanner graph of the code) with twice a number of vertices than the initial graph. Therefore, only LDPC codes constructed from bipartite CSGs will be considered.

 In the sequel, when considering a CSG $\mathcal G$, we refer to a connected bipartite CSG $\mathcal G$ with $2n$ vertices. 
By $\mathcal C( \mathcal G)$ we denote the LDPC code of length $n$ having $\mathcal G$ as its Tanner graph. 

Let $H$ be an $n\times n$  parity-check matrix of a code $\mathcal C( \mathcal G)$. A bit node  graph $\Gamma$ is defined in the following way: 
it has $n$ vertices that correspond to codeword bits, and two vertices are adjacent if and only if the corresponding bits are included in the same parity-check equation. 
In other words, two vertices of the graph $\Gamma$ are adjacent if and only if the corresponding bit nodes of the Tanner graph $\mathcal G$ of the code $\mathcal C( \mathcal G)$ have a common neighbour.

\begin{tm} \label{matG1}
Let $\mathcal G$ be a connected bipartite CSG with $2n$, $n \geq 7$, vertices and let $H$ be the parity-check matrix of the code $\mathcal C( \mathcal G)$. Then the corresponding bit node graph 
$\Gamma$ is 6-regular.
\end{tm}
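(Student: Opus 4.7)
The plan is to analyze the neighborhood of a single bit node $v$ in $\Gamma$ and show it has exactly six elements; vertex-transitivity of $\mathcal G$ (which is arc-transitive) then promotes this to 6-regularity. First I would fix a bit node $v$ and recall that in the Tanner graph $\mathcal G$ its neighborhood consists of exactly three check nodes $c_1, c_2, c_3$, each of which in turn has exactly three bit node neighbors, one of them being $v$. So each $c_i$ contributes two candidates for neighbors of $v$ in $\Gamma$, giving at most $3 \cdot 2 = 6$ distinct neighbors. The whole content of the theorem is therefore that these six candidates are pairwise distinct.

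Next I would observe that a coincidence among these candidates (either within a single $c_i$, impossible because $\mathcal G$ has no multiple edges, or across two different $c_i, c_j$) would produce another bit node $w \neq v$ adjacent to both $c_i$ and $c_j$. Together with $v$ this yields the closed walk $v\, c_i\, w\, c_j\, v$, which is a $4$-cycle in $\mathcal G$. Conversely, a $4$-cycle through $v$ forces such a coincidence. Hence the problem reduces to showing that $\mathcal G$ contains no $4$-cycle, i.e.\ has girth at least $6$.

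To establish the girth bound, I would invoke the classification mentioned just before the theorem: there are exactly five connected CSGs with girth less than six, namely $K_4$, $K_{3,3}$, the cube $Q_3$, the Petersen graph, and the dodecahedron. Of these, only $K_{3,3}$ and $Q_3$ are bipartite (the other three contain odd cycles and so fail to be bipartite). Their orders are $6$ and $8$, corresponding to $n=3$ and $n=4$ respectively. Since a bipartite graph cannot have a $5$-cycle, its girth is an even integer; combined with the classification, any bipartite CSG with $2n$ vertices and $n \geq 7$ must have girth at least $6$, so no $4$-cycle can pass through $v$.

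Combining the two observations, the six candidate neighbors of $v$ in $\Gamma$ are in fact distinct, so $\deg_\Gamma(v) = 6$. Since $\mathcal G$ is arc-transitive and hence vertex-transitive on each part of its bipartition, the same count holds for every bit node, and $\Gamma$ is $6$-regular. The main (minor) obstacle is simply making precise which small bipartite CSGs are excluded and confirming that the hypothesis $n \geq 7$ is exactly what is needed to rule them out; the combinatorial count itself is routine.
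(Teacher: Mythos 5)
Your proposal is correct and follows essentially the same argument as the paper: count the at most $3\cdot 2=6$ bit nodes sharing a check node with $v$, and use the girth-at-least-six property of bipartite CSGs on $2n\ge 14$ vertices (via the classification of the five small exceptions) to see that these six are distinct. The appeal to vertex-transitivity at the end is harmless but unnecessary, since the count already applies verbatim to every bit node.
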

\begin{proof}
	Every bipartite CSG $\mathcal G$ with  $2n$,  $n \geq 7$, vertices has the girth at least six.
	A bit node $v$ of the Tanner graph $\mathcal G$ of the code $\mathcal C( \mathcal G)$ has degree equal to three, and each of its neighbours is adjacent to another two bit nodes. 
	Since $\mathcal G$ does not have cycles of length four, the node $v$ has a common neighbour with exactly six other bit nodes. 
	In other words, the node $v$ of $\Gamma$ has degree equal to six.  Hence, the graph $\Gamma$ is 6-regular. 
\end{proof}

\begin{tm} \label{matG2}
	Let $\mathcal G$ be a connected bipartite CSG with $2n$, $n \geq 7$, vertices. Let $H$ be the parity-check matrix of the code $\mathcal C( \mathcal G)$ and let $\Gamma$ be the corresponding bit node graph. Then a $(0,1)$-matrix $T$ of order $n$ is the adjacency matrix of the graph $\Gamma$ if and only if $H^TH=3I+T$.
\end{tm}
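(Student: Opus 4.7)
The plan is to interpret $H^TH$ combinatorially and read off its entries using the definition of $\Gamma$ together with the girth hypothesis coming from Theorem~\ref{matG1}. Writing $H=[h_{ki}]$, we have
\begin{equation*}
(H^TH)_{ij} = \sum_{k=1}^{n} h_{ki}h_{kj},
\end{equation*}
so $(H^TH)_{ij}$ equals the number of check nodes of $\mathcal G$ that are adjacent (in the Tanner graph) simultaneously to bit node $i$ and bit node $j$. This is the arithmetic content that drives the whole argument.

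First I would handle the diagonal. The entry $(H^TH)_{ii}$ equals the number of check nodes adjacent to bit node $i$, which is exactly the degree of $i$ in $\mathcal G$. Since $\mathcal G$ is cubic, this degree is $3$, so $(H^TH)_{ii}=3$ for all $i$; this gives the $3I$ contribution.

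Next I would handle the off-diagonal entries, where the girth hypothesis is used. For $i\neq j$, $(H^TH)_{ij}$ counts the common check-node neighbours of bit nodes $i$ and $j$. Since $\mathcal G$ is bipartite with $n\geq 7$, its girth is at least $6$, so two distinct bit nodes cannot share two distinct check-node neighbours — such a pair would form a $4$-cycle. Hence $(H^TH)_{ij}\in\{0,1\}$, and it equals $1$ precisely when $i$ and $j$ have a common neighbour in $\mathcal G$, which by definition of $\Gamma$ is equivalent to $i$ and $j$ being adjacent in $\Gamma$. Therefore the off-diagonal part of $H^TH$ is exactly the adjacency matrix of $\Gamma$.

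Putting the two pieces together, $H^TH = 3I + T$ where $T$ is the adjacency matrix of $\Gamma$, proving the forward direction. For the converse, if some $(0,1)$-matrix $T'$ of order $n$ satisfies $H^TH=3I+T'$, then comparing with the identity just established gives $T'=T$, so $T'$ is the adjacency matrix of $\Gamma$. I do not foresee any genuine obstacle here; the only subtle point is the appeal to girth at least $6$ to rule out entries larger than $1$, which is precisely what the $n\geq 7$ assumption (via Theorem~\ref{matG1}) secures.
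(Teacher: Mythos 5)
Your proposal is correct and follows essentially the same route as the paper: both rest on reading $(H^TH)_{ij}$ as the number of common check-node neighbours of bit nodes $i$ and $j$, using cubicity for the diagonal and girth at least six (which holds for all bipartite CSGs with $2n\ge 14$ vertices) to force the off-diagonal entries into $\{0,1\}$. Your converse, which deduces $T'=T$ by subtracting the already-established identity, is a slightly cleaner uniqueness argument than the paper's re-derivation of the properties of $T$, but it is not a substantively different proof.
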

\begin{proof}
	The diagonal elements of the matrix $H^TH$ correspond to the degree of bit nodes of the Tanner graph $\mathcal G$ of the code $\mathcal C( \mathcal G)$, which is equal to three.
	Since $\mathcal G$ does not have cycles of length four, the other elements of the matrix are 1 or 0 depending whether the corresponding bit nodes have a common neighbour or not. 
	That is to say, off-diagonal elements of the matrix $H^TH$ are 1 or 0 depending whether the corresponding nodes of the graph $\Gamma$ are adjacent or not. 
	Hence, $H^TH=3I+T$, where $T$ is the adjacency matrix of the graph $\Gamma$.	
	
	Conversly, suppose that $T$ is a $(0,1)$-matrix of order $n$ such that $H^TH=3I+T$. $H^TH$ is a symmetric matrix, and therefore $T$ is a symmetric matrix. Clearly, the matrix $T$ has zeroes on the diagonal. An off-diagonal element of $T$ corresponds to the number of common neighbours of the  corresponding bit nodes of the Tanner graph $\mathcal G$ of the code $\mathcal C( \mathcal G)$. 
Since the girth of the Tanner graph $\mathcal G$ is at least six, the number of common neighbours is zero or one. Therefore, $T$ is the adjacency matrix of the graph $\Gamma$.
\end{proof}

The following results can be found in \cite{Ryan}.
\begin{tm} \cite[Theorem 3.1]{Ryan}  \label{words}
Let $\mathcal C$ be a binary linear code with a parity-check matrix $H$. Then there exists a codeword in $\mathcal C$ with weight $w$ if and only if there are $w$ columns in $H$ whose vector sum is a zero vector.
\end{tm}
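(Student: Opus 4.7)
The plan is to unpack the definition of a codeword: a vector $c \in \mathbb{F}_2^n$ lies in $\mathcal{C}$ exactly when $Hc^T = 0$. The key observation I would lean on is that the matrix-vector product $Hc^T$ can be rewritten as an $\mathbb{F}_2$-linear combination of the columns of $H$, namely $Hc^T = \sum_{j=1}^{n} c_j H_j$, where $H_j$ denotes the $j$-th column of $H$. Over $\mathbb{F}_2$, the only nonzero scalar is $1$, so this sum is simply the sum of those columns $H_j$ for which $c_j = 1$, i.e.\ the columns indexed by the support of $c$.

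For the forward direction, I would let $c \in \mathcal{C}$ be a codeword of weight $w$ and set $S = \{j : c_j = 1\}$, so $|S| = w$. Then the identity above gives $0 = Hc^T = \sum_{j \in S} H_j$, exhibiting $w$ columns of $H$ whose vector sum is zero.

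For the converse, suppose columns $H_{j_1}, \ldots, H_{j_w}$ of $H$ sum to the zero vector. I would define $c \in \mathbb{F}_2^n$ by $c_j = 1$ if $j \in \{j_1, \ldots, j_w\}$ and $c_j = 0$ otherwise; this vector has weight exactly $w$ (the indices $j_1, \ldots, j_w$ being distinct). The same expansion yields $Hc^T = \sum_{k=1}^{w} H_{j_k} = 0$, so $c \in \mathcal{C}$.

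There is no real obstacle here — the argument is essentially a reformulation of the definition $Hc^T = 0$ in terms of the column structure of $H$. The only small point worth stating clearly in the write-up is that we work over $\mathbb{F}_2$, so that the coefficients in the linear combination are either $0$ or $1$ and the combination reduces to a plain sum over the support; this is what lets the correspondence between weight-$w$ codewords and sets of $w$ dependent columns be a bijection on supports.
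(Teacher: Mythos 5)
Your argument is correct and is the standard one: the paper itself does not prove this statement but simply imports it from the cited reference (Ryan--Lin, Theorem 3.1), and the textbook proof is exactly the observation you make, namely that $Hc^T=\sum_{j}c_jH_j$ over $\mathbb{F}_2$ reduces to the sum of the columns indexed by the support of $c$. Your handling of both directions, including the remark that the chosen column indices are distinct so the constructed vector has weight exactly $w$, is complete; nothing further is needed.
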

\begin{tm} \cite[Theorem 3.2]{Ryan}  \label{columns}
Let $\mathcal C$ be a binary linear code with a parity-check matrix $H$. Then the minimum distance of the code $\mathcal C$ is equal to the smallest number of columns in $H$ whose vector sum is a zero vector.
\end{tm}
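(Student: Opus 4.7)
The plan is to reduce this statement to the previous theorem (Theorem \ref{words}) by means of the standard observation that for a linear code the minimum distance equals the minimum weight of a nonzero codeword. So the overall argument has two clean stages: first translate ``minimum distance'' into ``minimum nonzero weight,'' then translate ``minimum nonzero weight'' into a statement about columns of $H$ using Theorem \ref{words}.

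For the first stage, I would take arbitrary distinct $x,y\in\mathcal C$ and note that $d(x,y)$ equals the Hamming weight $w(x+y)$, since the coordinates where $x$ and $y$ differ are precisely the coordinates where $x+y$ is nonzero (we are working over $\mathbb F_2$). Because $\mathcal C$ is linear, $x+y\in\mathcal C$, and since $x\neq y$ it is a nonzero codeword. Conversely, every nonzero $c\in\mathcal C$ can be written as $c+0$ with $0\in\mathcal C$, so its weight $w(c)=d(c,0)$ arises as a distance between two codewords. Taking the minimum over both descriptions gives
\[
d \;=\; \min\{d(x,y) : x,y\in\mathcal C,\ x\neq y\} \;=\; \min\{w(c) : c\in\mathcal C,\ c\neq 0\}.
\]

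For the second stage, I would apply Theorem \ref{words}: a nonzero codeword of weight $w$ exists if and only if some $w$ columns of $H$ sum to the zero vector. Letting $w^\ast$ denote the smallest positive integer for which some $w^\ast$ columns of $H$ sum to zero, the equivalence shows that $w^\ast$ is attained by some nonzero codeword and that no nonzero codeword of smaller weight exists. Combining this with the identity from the first stage yields $d=w^\ast$, which is exactly the claim.

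The only subtle point, and what I would regard as the main (minor) obstacle, is a convention issue: the empty sum of columns is trivially zero, so ``smallest number of columns summing to zero'' has to be read as ``smallest positive number,'' matching the correspondence on the codeword side where one excludes the zero codeword. Once this convention is fixed, the proof is essentially immediate from Theorem \ref{words} together with the linearity-based translation between distances and weights.
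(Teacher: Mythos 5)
The paper states this result as a citation from \cite[Theorem 3.2]{Ryan} and gives no proof of its own, so there is nothing internal to compare against. Your argument is correct and is the standard one: the reduction of minimum distance to minimum nonzero weight via linearity over $\mathbb F_2$, followed by an application of Theorem \ref{words}, is exactly how this fact is usually established, and your remark about excluding the empty set of columns (matching the exclusion of the zero codeword) is the right way to handle the one genuine convention issue in the statement.
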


Due to the fact that the column weight of a parity check matrix $H$ of a code $\mathcal C (\mathcal G)$ is equal to three, Theorem \ref{words} implies 
that the code $\mathcal C (\mathcal G)$ is even. Moreover, the minimum distance of the code $\mathcal C (\mathcal G)$ is an even number.  \\

We will use Theorem \ref{columns} in the proof of the following theorem. 

\begin{tm} \label{mindG}
Let $\mathcal G$ be a connected bipartite CSG with $2n$, $n \geq 7$, vertices  and let  $\Gamma$ be the corresponding bit node graph. 
The minimum distance of the code $\mathcal C( \mathcal G)$ is at least six if and only if the clique number of the graph $\Gamma$ is at most three.
\end{tm}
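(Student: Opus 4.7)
The plan is to invoke Theorem~\ref{columns} and reduce everything to a counting statement about columns of $H$. Because each column of $H$ has the odd weight~$3$, the sum of all rows of $H$ is the all-ones vector; hence $(1,\ldots,1) \in \mathcal C(\mathcal G)^{\perp}$, the code is even, and its minimum distance is an even integer. Consequently $d \geq 6$ is equivalent to the non-existence of both $2$-column and $4$-column subsets of $H$ summing to zero. The $2$-column case is disposed of immediately: two equal columns would mean two bit nodes share all three check neighbours, which creates $4$-cycles in $\mathcal G$ and contradicts girth~$\geq 6$. So the whole problem reduces to characterising when four columns of $H$ sum to zero.

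The central claim I would prove is: four columns indexed by bit nodes $v_1,v_2,v_3,v_4$ sum to zero if and only if $\{v_1,v_2,v_3,v_4\}$ induces a $K_4$ in $\Gamma$. For the forward direction, each check node must appear in an even number of the four columns, i.e.\ $0$, $2$, or $4$; the value $4$ is ruled out because $\mathcal G$ is cubic and no check has four bit-neighbours. The incidence count $4\cdot 3 = 12 = 2\cdot 6$ then forces exactly six distinct checks to each be shared by two of the four bit nodes. Each such check certifies an edge of $\Gamma$; girth~$\geq 6$ limits every pair of bit nodes to at most one common check; and since there are precisely $\binom{4}{2}=6$ pairs, the six checks biject with the six pairs, so $\{v_1,\ldots,v_4\}$ is a $K_4$ in $\Gamma$.

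For the converse, I would let $v_1,\ldots,v_4$ span a $K_4$ in $\Gamma$ and denote by $c_{ij}$ the unique check shared by $v_i$ and $v_j$. If the six $c_{ij}$ are all distinct, each contributes to exactly two of the four columns and their sum is the zero vector, yielding a weight-$4$ codeword. A coincidence $c_{ij}=c_{kl}$ with $\{i,j\}\cap\{k,l\}=\emptyset$ is immediately blocked by cubicity, because the check would have to be adjacent to all four bit nodes. Combining both directions yields the chain of equivalences $d\geq 6 \iff$ no four columns sum to zero $\iff$ $\Gamma$ contains no $K_4$ $\iff \omega(\Gamma)\leq 3$.

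The hardest point sits in the converse, in ruling out the remaining coincidence $c_{ij}=c_{ik}$ where the two index pairs overlap. Such a coincidence would place a single check adjacent to three of the $v_i$'s; I would have to exploit the cubic bound on that check's bit-neighbourhood together with the girth condition on the three remaining pairs involving the fourth vertex to derive a contradiction. This is the most delicate step, and is where the bipartite structure of $\mathcal G$ together with the exact degree and girth constraints must be pushed carefully to force all six common checks to be distinct.
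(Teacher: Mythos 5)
Your reduction to the four-column question is sound, and your forward implication (four columns summing to zero force six distinct shared checks and hence a $K_4$ in $\Gamma$) is correct and in fact more careful than the paper's own proof, which simply asserts both implications without examining possible coincidences among the common checks. The genuine problem is exactly the step you flag as ``the most delicate'': the coincidence $c_{ij}=c_{ik}$ cannot be disposed of by a contradiction argument, because the configuration it describes really occurs in cubic symmetric graphs of girth six. Indeed, if $c_{12}=c_{13}$ then that check is adjacent to $v_1,v_2,v_3$, and by uniqueness of common neighbours it must also equal $c_{23}$; one is left with a single check $c$ adjacent to $v_1,v_2,v_3$ and a bit node $v_4$ whose three check neighbours meet $v_1,v_2,v_3$ one apiece. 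This subgraph (two vertices joined by three internally disjoint paths of length three) has girth six and is realized, for instance, in the Heawood graph $14A$: take a line $c$ of the Fano plane, its three points $v_1,v_2,v_3$, and any point $v_4$ off $c$; the three lines through $v_4$ meet $c$ in the three distinct points $v_1,v_2,v_3$. In that situation $\{v_1,v_2,v_3,v_4\}$ induces a $K_4$ in $\Gamma$, yet the corresponding four columns do \emph{not} sum to zero (the check $c$ occurs an odd number of times), so your central claim ``four columns sum to zero if and only if the four bit nodes induce a $K_4$'' is false in the direction you still need.

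Consequently the implication $\omega(\Gamma)\ge 4\Rightarrow d=4$ is not established by your plan: a $K_4$ arising from the degenerate configuration does not directly yield a weight-four codeword, and to rescue the statement one would have to show that the presence of such a configuration nevertheless forces \emph{some other} four columns to sum to zero (as happens in the Heawood graph, where $d=4$ via the complement of a line). That requires an additional idea --- some use of arc-transitivity or a global argument --- which is absent from the proposal; no amount of pushing the local degree and girth constraints will produce the contradiction you are hoping for, because there is nothing contradictory about the configuration. For what it is worth, the paper's proof silently makes the very leap you were right to distrust, asserting that any four pairwise ``collinear'' bit nodes give columns summing to zero; your analysis isolates the precise point where that assertion fails.
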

\begin{proof}
The girth of the Tanner graph $\mathcal G$ of the code $\mathcal C( \mathcal G)$ is  greater than four and, therefore, the minimum distance of the code is at least four (see \cite{Storme}).
 
If there exists a set $S$ in the graph $\mathcal G$ which consists of four bit nodes with the property that every pair of the vertices has a common neighbour, 
then the sum of the corresponding columns of the parity-check matrix equals zero. Hence, the minimum distance is equal to four. 
In other words, if the complete graph $K_4$ is the subgraph of the graph $\Gamma$, then the minimum distance of the code is equal to four.  
Consequently, if the minimum distance  of the code is at least six, then the clique number of the graph $\Gamma$ is at most three.

Conversly, assume that the clique number of the graph $\Gamma$ is at most three. Accordingly,  it is not possible to find four columns of the parity-check matrix whose sum equals zero. 
As the result of Theorem \ref{columns}, the minimum distance of the code is at least six.
\end{proof}

In \cite{Tanner} Tanner gave a lower bounds for the minimum distance of a regular LDPC code with a parity-check matrix $M$ in terms of the  
second largest eigenvalue $\mu_2$ of the matrix $M^TM$. 
\begin{tm}\cite[Theorems 3.1 and 4.1]{Tanner} \label{Tanner_d}
If the Tanner graph of a $(w_c,w_r)-$regular LDPC code is connected and has $n$ bit nodes, then the minimum distance of the code satisfies $d \geq max\{d_1, d_2\}$, where
$$d_1=\frac{n(2w_c-\mu_2)}{w_cw_r-\mu_2}, \ \ d_2 = \frac{2n(2w_c+w_r-2-\mu_2)}{w_r(w_cw_r-\mu_2)}.$$\\
\end{tm}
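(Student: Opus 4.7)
My plan is to compare a spectral upper bound on a quadratic form with a combinatorial (parity-based) lower bound on the same quantity. Let $c$ be a codeword of minimum weight $d$, let $S$ be its support, and let $x\in\{0,1\}^n\subset\mathbb R^n$ denote the real indicator vector of $S$. For each check (row) $i$ define $r_i:=(Mx)_i$, the number of positions of $S$ incident to the $i$-th check. Since $Mc\equiv 0\pmod 2$, each $r_i$ is a non-negative \emph{even} integer, and $(w_c,w_r)$-regularity yields $\sum_i r_i=dw_c$ and $r_i\le w_r$.

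The spectral upper bound on $\|Mx\|^2=x^\top M^\top M x=\sum_i r_i^2$ comes from the eigenstructure of $M^\top M$. Connectedness of the Tanner graph forces $\mathbf 1_n$ to be the unique (up to scaling) top eigenvector of $M^\top M$, with eigenvalue $w_cw_r$. Writing $x=\tfrac{d}{n}\mathbf 1_n+y$ with $y\perp\mathbf 1_n$ and $\|y\|^2=d(1-d/n)$, Rayleigh's inequality yields
$$\|Mx\|^2\;\le\;\frac{d^2w_cw_r}{n}\;+\;\mu_2\,d\!\left(1-\frac{d}{n}\right).$$
For the first bound $d_1$, I would pair this with the crude parity estimate $r_i^2\ge 2r_i$ (valid because $r_i\in 2\mathbb Z_{\ge 0}$), giving $\|Mx\|^2\ge 2dw_c$. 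Chaining and clearing denominators reduces the combined inequality to $n(2w_c-\mu_2)\le d(w_cw_r-\mu_2)$, i.e.\ $d\ge d_1$.

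The hard part will be obtaining $d_2$, because $\|Mx\|^2\ge 2dw_c$ is already tight when every active check meets $S$ in exactly two bits. To sharpen it I would additionally invoke the cap $r_i\le w_r$ together with integrality, which forces the number $t$ of active checks to satisfy $dw_c/w_r\le t\le dw_c/2$; Cauchy--Schwarz then gives $\sum_i r_i^2\ge(dw_c)^2/t$, and improving the upper bound on $t$ via an expander-type estimate (or equivalently by applying the same spectral technique to the shifted form $x^\top(M^\top M-w_cI)x=\sum_i r_i(r_i-1)$, which counts pairs of positions of $S$ sharing a common check) should produce the extra $(w_r-2)$ term appearing in the numerator of $d_2$. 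Once the refined inequality is in place, solving for $d$ delivers $d_2$, and the theorem follows on taking the maximum of the two bounds.
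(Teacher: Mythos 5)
This theorem is quoted from Tanner's paper; the present paper offers no proof of it, so your attempt can only be measured against Tanner's original arguments. Your derivation of $d_1$ is complete and correct, and it is essentially Tanner's bit-oriented bound (his Theorem 3.1): the identity $\|Mx\|^2=\sum_i r_i^2$, the parity observation $r_i\in 2\mathbb{Z}_{\ge 0}$ giving $\sum_i r_i^2\ge 2\sum_i r_i=2dw_c$, and the Rayleigh estimate $\|Mx\|^2\le \frac{d^2w_cw_r}{n}+\mu_2\,d\bigl(1-\frac{d}{n}\bigr)$, which is valid because connectedness makes $M^\top M$ irreducible, so $w_cw_r$ is a simple top eigenvalue with Perron vector $\mathbf{1}_n$; this also gives $\mu_2<w_cw_r$ and justifies the final division.

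The $d_2$ part, however, has a genuine gap, and the routes you sketch do not close it. Cauchy--Schwarz gives $\sum_i r_i^2\ge (dw_c)^2/t$, but the only parity information on $t$ is $t\le dw_c/2$, which returns exactly $\sum_i r_i^2\ge 2dw_c$ again, i.e.\ nothing beyond $d_1$; likewise the shifted form $x^\top(M^\top M-w_cI)x=\sum_i r_i(r_i-1)$ combined with $r_i(r_i-1)\ge r_i$ reproduces the same inequality $2w_c-\mu_2\le \frac{d}{n}(w_cw_r-\mu_2)$. The missing idea in Tanner's Theorem 4.1 is to switch to the check side: let $z$ be the indicator of the $t$ active checks and bound $z^\top MM^\top z$ (same nonzero spectrum, top eigenvector $\mathbf{1}_m$) from above by $\mu_2 t+\frac{t^2}{m}(w_cw_r-\mu_2)$ and from below by $dw_c^2+(tw_r-dw_c)$ --- the first term because every bit of $S$ sees all $w_c$ of its checks active, the second because the $tw_r-dw_c$ edges leaving the active checks land on bits outside $S$ and each contributes at least $1$ to $\sum_{j\notin S}(M^\top z)_j^2$. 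Substituting $t\le dw_c/2$ (after checking monotonicity of the resulting quadratic in $t$ on the relevant range) yields precisely the numerator $2w_c+w_r-2-\mu_2$. Staying on the bit side, as your sketch does, the $(w_r-2)$ term does not appear.
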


\begin{tm} \label{mind}

Let $\mathcal G$ be a connected bipartite CSG with $2n$, $n \geq 7$, vertices and let $\lambda_2$ be the second largest eigenvalue of its adjacency matrix $A$. 
Then the minimum distance of the code $\mathcal C( \mathcal G)$ satisfies the following condition:
$$ \displaystyle d\geq \left\{ \begin{aligned}{}
	\frac 2 5 n, &&   \lambda_2 \leq 2,\\
	\frac 2 9 n, && 2 < \lambda_2 \leq \sqrt 6, \\
	4, && \sqrt 6 <   \lambda_2< 3. \\
\end{aligned} \right .$$
\end{tm}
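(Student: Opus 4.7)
The plan is to apply Tanner's bound from Theorem \ref{Tanner_d} with $w_c = w_r = 3$, after translating the hypothesis on $\lambda_2$ into a statement about the second largest eigenvalue $\mu_2$ of $H^TH$.

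The first step is a spectral reduction. From the bipartite block form (\ref{A}) one computes
\[
 A^2 \;=\; \begin{pmatrix} HH^T & 0 \\ 0 & H^TH \end{pmatrix},
\]
so $\mathrm{spec}(A^2)$ is the multiset union of $\mathrm{spec}(HH^T)$ and $\mathrm{spec}(H^TH)$. Since $HH^T$ and $H^TH$ have the same characteristic polynomial, and the bipartite symmetry $\mathrm{spec}(A) = -\mathrm{spec}(A)$ forces every value $\theta^2$ to occur an even number of times in $\mathrm{spec}(A^2)$, it follows that each of $HH^T$ and $H^TH$ has eigenvalues exactly $\lambda_1^2 = 9, \lambda_2^2, \ldots, \lambda_n^2$, where $\lambda_1 \ge \lambda_2 \ge \cdots \ge \lambda_n \ge 0$ are the non-negative eigenvalues of $A$. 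In particular $\mu_2 = \lambda_2^2$.

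Substituting $w_c = w_r = 3$ and $\mu_2 = \lambda_2^2$ into Theorem \ref{Tanner_d} gives
\[
 d_1 \;=\; \frac{n(6 - \lambda_2^2)}{9 - \lambda_2^2}, \qquad d_2 \;=\; \frac{2n(7 - \lambda_2^2)}{3(9 - \lambda_2^2)},
\]
and I would then split on $\lambda_2$. For $\lambda_2 \le 2$ the identity
\[
 d_1 - \frac{2n}{5} \;=\; \frac{3n(4 - \lambda_2^2)}{5(9 - \lambda_2^2)} \;\ge\; 0
\]
yields $d \ge d_1 \ge 2n/5$. For $2 < \lambda_2 \le \sqrt 6$ the analogous identity
\[
 d_2 - \frac{2n}{9} \;=\; \frac{4n(6 - \lambda_2^2)}{9(9 - \lambda_2^2)} \;\ge\; 0
\]
yields $d \ge d_2 \ge 2n/9$. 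For $\sqrt 6 < \lambda_2 < 3$ Tanner's formulas are no longer useful, but $\mathcal{G}$ has girth at least six (as already used in Theorems \ref{matG1} and \ref{mindG}), so the girth-to-distance fact cited from \cite{Storme} gives $d \ge 4$.

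The only nontrivial step is the eigenvalue identification $\mu_2 = \lambda_2^2$; once that is in place, the three cases reduce to elementary algebra on the Tanner formulas, together with the already-established girth bound for the remaining regime.
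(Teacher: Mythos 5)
Your proposal is correct and follows essentially the same route as the paper: identify $\mu_2=\lambda_2^2$ via the block structure of $A^2$, feed $w_c=w_r=3$ into Tanner's bound, and fall back on the girth-$\geq 6$ bound $d\geq 4$ in the last regime. The only (harmless) differences are that you justify $\mu_2=\lambda_2^2$ by the characteristic-polynomial argument instead of citing Nilli's $\lambda_2\geq 1$, and you verify $d_1\geq \frac{2}{5}n$ and $d_2\geq\frac{2}{9}n$ directly from $d\geq\max\{d_1,d_2\}$, which lets you skip the paper's appeal to Shibuya for deciding which of $d_1,d_2$ dominates.
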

\begin{proof}
Since $\mathcal G$ is a bipartite graph, the spectrum of $\mathcal G$ is symmetric with respect to 0. Moreover, since $\mathcal G$ is 3-regular, 
for every eigenvalue $\lambda$ of its adjacency matrix $A$ the inequality $\left|\lambda\right|\leq 3$ holds (see \cite[Theorem 11.5.1]{Balakrishnan}). 
Furthermore, if $\lambda$ is an eigenvalue of $A$, then  $\lambda^2$ is an eigenvalue of $A^2$. 
Let $\mu_2$ be the second largest eigenvalue of the matrix $H^TH$, 
where $H$ is the parity-check matrix of the code $\mathcal C( \mathcal G)$. Using the fact that the matrices $H^TH$ and $HH^T$ have the same non-zero eigenvalues and the fact that $\lambda_2\geq1$ 
(see \cite{Nilli}), it follows that $\mu_2 = \lambda_2^2$.

According to Theorem \ref{Tanner_d}, the minimum distance of the code $\mathcal C( \mathcal G)$ satisfies $d \geq \textrm{max}\{d_1, d_2\}$, where
$$d_1=\frac{n(6-\mu_2)}{9-\mu_2}, \ \ d_2 = \frac{2n(7-\mu_2)}{3\cdot(9-\mu_2)}.$$ From this equalities one can easily obtain
	\begin{equation} \label{d1_mu2}
\mu_2=\frac{6n-9d_1}{n-d_1},
\end{equation}
and 
	\begin{equation} \label{d2_mu2}
\mu_2 = \frac{14n-27d_2}{2n-3d_2}.
\end{equation}

The inequality $d_1\geq d_2$ holds for $\mu_2 \leq 4$, and $d_2\geq d_1$ for $\mu_2 > 4$ (see \cite{Shibuya}). In the first case, when $\mu_2 \leq 4$, using the equality (\ref{d1_mu2}), one gets 
that $d\geq d_1 \geq \frac{2}{5}n$. 
If  $4 < \mu_2 \leq  6$, then the minimum distance satisfies the inequality $d\geq d_2 \geq \frac{2}{9}n$, which can be obtained using (\ref{d2_mu2}).
For $\mu_2 \geq 6$ the minimum distance of the code is at least four,  as discussed in the proof of Theorem \ref{mindG}.
\end{proof}

\begin{tm} \label{dimG}
Let $\mathcal G$ be a connected bipartite CSG with $2n$, $n \geq 7$, vertices. Then the dimension of the code $\mathcal C( \mathcal G)$ is at most $\displaystyle \frac{5n}{6}$.
\end{tm}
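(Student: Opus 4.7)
The plan is to show $\text{rank}_2(H) \geq n/6$, since $\dim \mathcal{C}(\mathcal{G}) = n - \text{rank}_2(H)$, which is equivalent to the desired bound $\dim \mathcal{C}(\mathcal{G}) \leq 5n/6$.

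First I would reformulate this rank bound as an independence-number question on the bit node graph $\Gamma$. The column of $H$ indexed by a bit node $v$ is the indicator vector of the neighbourhood of $v$ in $\mathcal{G}$, so two columns have disjoint supports precisely when the two bit nodes share no common neighbour in $\mathcal{G}$, which is exactly the condition of being non-adjacent in $\Gamma$. Nonzero vectors with pairwise disjoint supports are linearly independent over $\mathbb{F}_2$, so any independent set in $\Gamma$ of size $k$ yields $k$ linearly independent columns of $H$. Hence $\text{rank}_2(H) \geq \alpha(\Gamma)$, and it suffices to prove $\alpha(\Gamma) \geq n/6$.

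Next I would appeal to Brooks' theorem applied to $\Gamma$. By Theorem \ref{matG1}, $\Gamma$ is 6-regular, and $\Gamma$ is connected: any walk $u = w_0, c_0, w_1, c_1, \ldots, w_\ell = v$ in $\mathcal{G}$ between bit nodes descends to the walk $w_0, w_1, \ldots, w_\ell$ in $\Gamma$, since consecutive $w_i, w_{i+1}$ have the common neighbour $c_i$. A connected 6-regular graph cannot be an odd cycle, and it is complete if and only if it equals $K_7$. Provided $\Gamma \neq K_7$, Brooks' theorem gives $\chi(\Gamma) \leq 6$, and then averaging over colour classes produces an independent set of size at least $\lceil n/6 \rceil$, completing the main case.

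The main obstacle is the Brooks exception $\Gamma = K_7$, which forces $n = 7$. By the classification of cubic symmetric graphs the unique bipartite CSG on $14$ vertices is the Heawood graph, whose parity-check matrix $H$ is (up to isomorphism of codes) the incidence matrix of the Fano plane; this matrix is well known to have $\mathbb{F}_2$-rank $4$. Hence $\dim \mathcal{C} = 7 - 4 = 3 \leq 35/6 = 5n/6$, so the bound holds in this remaining case as well.
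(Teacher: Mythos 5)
Your proof is correct and follows the same overall strategy as the paper: bound the dimension by $n-\alpha(\Gamma)$ via the observation that an independent set in the bit node graph $\Gamma$ yields linearly independent columns of $H$, and then get $\alpha(\Gamma)\geq n/6$ from a Brooks-type bound for connected $6$-regular graphs. Two points are worth noting. First, you supply the justification for $\mathrm{rank}_2(H)\geq\alpha(\Gamma)$ (pairwise disjoint supports of the neighbourhood indicator columns), which the paper asserts without argument. Second, and more substantively, you correctly isolate and dispose of the exceptional case $\Gamma=K_7$. The paper cites Catlin's result, which explicitly excludes $K_7$, and then asserts $\alpha(\Gamma)\geq n/6$ without checking that $\Gamma\neq K_7$; but this exception actually occurs: for the Heawood graph ($14A$) the bit node graph is exactly $K_7$ (any two points of the Fano plane lie on a common line), so $\alpha(\Gamma)=1<7/6$ and the paper's inequality fails there. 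The theorem survives because the $[7,3,4]$ code still satisfies $3\leq 35/6$, which is precisely the direct verification you give. So your write-up is not merely a restatement of the paper's proof -- it patches a genuine (if small) gap in it.
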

\begin{proof}
To determine the dimension of the code $\mathcal C( \mathcal G)$ we observe the 2-rank of its parity-check matrix $H$.   
The 2-rank of the matrix $H$ is greater or equal than the size of a maximal independent set of the bit node graph $\Gamma$. Hence, the dimension of the code is at most  $n- \alpha(\Gamma)$, 
where $n$ is the length of the code, i.e. the number of vertices of the graph $\Gamma$, and  $\alpha(\Gamma)$ is the independence number of the  graph $\Gamma$. 

According to \cite{Catlin} every 6-regular connected graph with $n$ vertices, other than $K_7$, has an independent set which contains at least $\frac{n}{6}$ vertices. 
So, it follows from Theorem \ref{matG1} that the inequality $\alpha(\Gamma)\geq\frac{n}{6} $ holds. Therefore, the upper bound for the dimension of the code is $\displaystyle \frac{5n}{6}$.
\end{proof}

If an adjacency matrix $A$ of a bipartite CSG $\mathcal G$ has the full rank, then the parity-check matrix $H$ of the LDPC code also has the full rank. Hence,  
the constructed LDPC code has the dimension zero, i.e. the constructed code is trivial. \\

We will need the following two results from \cite{Potocnik}. 
\begin{tm}\cite[Corollary 5]{Potocnik} \label{3-arc}
	The dimension of the nullspace of an adjacency matrix of a connected 3-arc-transitive graph which is $s$-regular for $s \ge 2 $ is non-zero.
\end{tm}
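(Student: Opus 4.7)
Because this is a quoted result, the authors themselves will not prove it; what follows is only a sketch of what I would try if handed the problem cold. The statement reduces to producing a nonzero vector $x$ with $Ax = 0$, and in the coding-theoretic context that is invoked immediately after the corollary the relevant field is $\mathbb F_2$, so that is the case I focus on. The natural angle is that $A$ commutes with the full automorphism group $G$ of $\mathcal G$, so $\ker A$ is an $\mathbb F_2 G$-submodule of the permutation module $\mathbb F_2^V$; it then suffices to locate one nontrivial simple constituent of $\mathbb F_2^V$ on which $A$ vanishes.

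The plan is to attack this through the distance partition around a fixed vertex $v$. Because $\mathcal G$ is 3-arc-transitive, the stabilizer $G_v$ is transitive on each of the shells $S_1, S_2, S_3$ at distances $1,2,3$ from $v$, so the characteristic vectors $\chi_{\{v\}}, \chi_{S_1}, \chi_{S_2}, \chi_{S_3}$ span a small $A$-invariant subspace $W$. The matrix of $A|_W$ has entries equal to the intersection numbers $c_{ij} = |N(u) \cap S_j|$ for $u \in S_i$, and these are constants by the arc-transitivity hypothesis. Writing them down explicitly in terms of the valency and the girth produces a small integer matrix, and my first attempt would be to show that it is singular modulo $2$, in which case the corresponding $\mathbb F_2$-linear combination of the $\chi_{S_i}$ is the required kernel element.

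If that collapsed matrix happens to be nonsingular over $\mathbb F_2$, I would refine $W$ by replacing the $G_v$-partition by the orbits of the stabilizer of an edge or of a $2$-arc. The hypothesis $s \ge 2$ is exactly what makes this refinement meaningful, since for $s \ge 2$ the stabilizer of a $2$-arc contains an involution that swaps the two ``other'' neighbours at each of its endpoints, so the finer partition is strictly finer than the vertex partition and yet remains bounded in size by a function of $s$; iterating through vertex-, edge- and $2$-arc-partitions produces a chain of $A$-invariant subspaces on which $A$ can still be computed by hand.

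The main obstacle, as I see it, is verifying that at some stage of this process a genuinely nonzero kernel vector appears, rather than every candidate collapsing to zero. Over $\mathbb Q$ such a vector need not exist (for example, the Petersen graph has $\det A = 48$, hence is nonsingular rationally but singular modulo $2$), so the statement is essentially a characteristic-two phenomenon. Pinning it down cleanly requires keeping track of how the $\mathbb F_2 G$-structure of $\mathbb F_2^V$ interacts with the $2$-radical of the group algebra, and it is this bookkeeping, rather than any single combinatorial insight, where I would expect the bulk of the work to lie.
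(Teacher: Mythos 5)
This is a quoted result (Corollary 5 of Poto\v cnik--Spiga--Verret) and the paper reproduces no proof of it, so there is no internal argument to compare yours against; judged on its own terms, your proposal is a research plan rather than a proof, and its one concrete technical step is false as stated. You claim that $W=\mathrm{span}\{\chi_{\{v\}},\chi_{S_1},\chi_{S_2},\chi_{S_3}\}$ is $A$-invariant. It is not unless the graph has diameter at most $3$: the vector $A\chi_{S_3}$ is supported also on the fourth shell $S_4$, which is nonempty for every cubic graph on more than $1+3+6+12=22$ vertices -- i.e.\ for almost every graph in Table 1. Three-arc-transitivity controls the $G_v$-orbits (and hence the intersection numbers $c_{ij}$) only out to distance $3$, so to obtain a genuinely $A$-invariant subspace of this kind you must pass to the full $G_v$-orbit partition of $V$; that partition is equitable, but its number of cells and its quotient matrix are not determined by the valency and the girth, so the ``small integer matrix written down explicitly'' that you propose to test for singularity mod $2$ does not exist.

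Even with that repaired, the strategy has a structural hole that you yourself flag as ``the main obstacle'': you never show that a nonzero kernel vector constant on the orbits of $G_v$ (or of an edge or $2$-arc stabilizer) must appear. Over $\mathbb{C}$, complete reducibility and Frobenius reciprocity guarantee that a singular $A$ forces the collapsed matrix on $G_v$-orbits to be singular, but both ingredients fail over $\mathbb{F}_2$; the kernel could be a nonzero $\mathbb{F}_2G$-submodule with no nonzero $G_v$-fixed vector, so every collapsed matrix you compute could be nonsingular even when the theorem holds. Since producing that kernel vector is precisely the content of the statement, nothing has been proved. For comparison, the argument in the cited reference is local rather than global: it exploits the $2$-transitive action of $G_v$ on the $s$ neighbours of $v$ and the submodule structure of the corresponding permutation module in characteristic $p$ dividing $s-1$ (here $p=2$, $s=3$), which is exactly where the characteristic-two nature of the phenomenon -- which you correctly identified via the Petersen/Heawood-type examples -- actually enters.
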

\begin{tm}\cite[Theorem 13]{Potocnik} \label{potocnik}
	Let $p$ be a prime number and let $\mathcal G$ be a vertex-transitive $s$-regular multigraph with $n$ vertices. Let $\mathbb F$ be a field of characteristic $p$. If $gcd(p, s)$ = 1 and $n$ is a power	of $p$, then the adjacency matrix of the graph $\mathcal G$ is invertible over $\mathbb F$.
\end{tm}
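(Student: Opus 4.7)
The plan is to show $\ker A=\{0\}$ over $\mathbb F$ by a representation-theoretic argument on the permutation module $\mathbb F^V$. I would fix a vertex-transitive subgroup $G\le\operatorname{Aut}(\mathcal G)$; since $A$ commutes with every $g\in G$ (regarded as a permutation matrix on $\mathbb F^V$), $\ker A$ is a $G$-submodule of $\mathbb F^V$. The all-ones vector $\mathbf{1}$ satisfies $A\mathbf{1}=s\mathbf{1}$, and $\gcd(p,s)=1$ forces $s\neq 0$ in $\mathbb F$, so $\mathbf{1}\notin\ker A$.

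Next I would choose a Sylow $p$-subgroup $P\le G$ and verify that $P$ itself is transitive on $V$. Fix $v\in V$; by orbit--stabilizer $|G|=p^k|G_v|$, so writing $|G_v|=p^j m$ with $\gcd(p,m)=1$ one has $|P|=p^{k+j}$, while $P\cap G_v$ is a $p$-subgroup of $G_v$ and therefore has order dividing $p^j$. Hence $|Pv|=|P|/|P\cap G_v|\ge p^k=|V|$, forcing equality. Consequently every $P$-invariant function $V\to\mathbb F$ is constant, i.e.\ $(\mathbb F^V)^P=\mathbb F\mathbf{1}$.

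The last ingredient is the standard fact that every nonzero finite-dimensional module over $\mathbb F[P]$, with $P$ a $p$-group and $\operatorname{char}\mathbb F=p$, has a nonzero $P$-fixed vector; this is proved by taking a central element $z\in Z(P)$ of order $p$, noting $(z-1)^p=0$ in characteristic $p$ so that $z-1$ is nilpotent and $\ker(z-1)\neq 0$, and inducting on $|P|$ via the action of $P/\langle z\rangle$ on this kernel. Applying this to $\ker A$ (which is finite-dimensional since $\mathbb F^V$ is), if $\ker A\neq\{0\}$ then $(\ker A)^P\neq\{0\}$, but by the previous paragraph $(\ker A)^P\subseteq\mathbb F\mathbf{1}\cap\ker A=\{0\}$, a contradiction. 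Therefore $\ker A=\{0\}$ and $A$ is invertible over $\mathbb F$.

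The main obstacle is the Sylow step --- one must verify that a Sylow $p$-subgroup of a transitive group on $p^k$ points is itself transitive --- together with having the $p$-group fixed-point lemma at hand. The key conceptual point is that the hypotheses $\gcd(p,s)=1$ and $|V|=p^k$ play distinct roles: the former ensures $\mathbf{1}$ escapes $\ker A$, while the latter supplies a transitive $p$-subgroup that pins every candidate kernel element back onto the line $\mathbb F\mathbf{1}$. Once these are in place, the contradiction is immediate.
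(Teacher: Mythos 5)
Your argument is correct. Note that the paper under review does not prove this statement at all: it is imported verbatim as Theorem 13 of the cited reference of Poto\v{c}nik, Spiga and Verret, so there is no internal proof to compare against. Your proof is a clean, self-contained reconstruction of the standard modular-representation-theoretic argument, and each step checks out: the adjacency matrix commutes with the automorphism group, so $\ker A$ is a submodule of the permutation module $\mathbb F^V$; the hypothesis $\gcd(p,s)=1$ keeps $\mathbf 1$ out of $\ker A$ because $A\mathbf 1 = s\mathbf 1$ with $s\neq 0$ in $\mathbb F$ (this works equally well for multigraphs, where $s$ counts edge multiplicities); your orbit--stabilizer computation showing that a Sylow $p$-subgroup of a transitive group of degree $p^k$ is itself transitive is correct, so $(\mathbb F^V)^P=\mathbb F\mathbf 1$; and the $p$-group fixed-point lemma (via a central element $z$ of order $p$ with $(z-1)^p=z^p-1=0$ and induction on $|P|$) is applied correctly to force $(\ker A)^P\neq\{0\}$ whenever $\ker A\neq\{0\}$, yielding the contradiction. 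This is essentially the mechanism behind the proof in the original source, which can equivalently be phrased via the fact that the permutation module of a transitive $p$-group in characteristic $p$ has $\mathbb F\mathbf 1$ as its unique minimal submodule (its socle), so any nonzero submodule must contain $\mathbf 1$; your fixed-point formulation buys the same conclusion with only elementary group theory.
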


Theorem \ref{thm-trivial} gives a condition for the LDPC code constructed from a CSG with $v=2^t$ vertices to be trivial.

\begin{tm} \label{thm-trivial}
Let $\mathcal G$ be a connected bipartite CSG with $v=2^t$ vertices and let $\mathcal C( \mathcal G)$ be the LDPC code constructed from $\mathcal G$. 
Then the parameters of the code $\mathcal C( \mathcal G)$ are $[2^{t-1},0,2^{t-1}]$.
\begin{proof}
The length of the code is $\frac 1 2 v=2^{t-1}$. Every arc-transitive graph without isolated vertices is vertex-transitive, so $\mathcal G$ is a vertex-transitive graph. 
Since $\mathcal G$ is a 3-regular graph with $v=2^t$ vertices, using Theorem \ref{potocnik} one concludes that the adjacency matrix of $\mathcal G$ has the full rank over 
$\mathbb{F}_2$. Hence, the parity-check matrix of the code $\mathcal C( \mathcal G)$ is the full rank matrix, so the dimension of the code $\mathcal C( \mathcal G)$ is equal to 0. 
\end{proof}
\end{tm}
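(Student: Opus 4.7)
The plan is to reduce the theorem to a direct application of Theorem \ref{potocnik}. Since $\mathcal{G}$ is arc-transitive it is vertex-transitive, it is $3$-regular (so $s=3$), and its vertex count is $v=2^t$, a power of the prime $p=2$. Because $\gcd(2,3)=1$, all hypotheses of Theorem \ref{potocnik} are satisfied over $\mathbb{F}_2$, and so the adjacency matrix $A$ of $\mathcal{G}$ is invertible over $\mathbb{F}_2$; equivalently, $\mathrm{rank}_2(A)=2^t$.

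From here the argument is short. The length of the code is immediate from the construction: the $n\times n$ block $H$ in the bipartite decomposition (\ref{A}) has $n=v/2=2^{t-1}$ columns, so $\mathcal{C}(\mathcal{G})$ has length $2^{t-1}$. The identity $\mathrm{rank}_2(H)=\tfrac{1}{2}\mathrm{rank}_2(A)$ recorded after (\ref{A}) then forces $\mathrm{rank}_2(H)=2^{t-1}=n$, i.e.\ $H$ has full row rank. Since $\dim\mathcal{C}(\mathcal{G})=n-\mathrm{rank}_2(H)$, the dimension of the code is $0$. Consequently $\mathcal{C}(\mathcal{G})=\{0\}$, which yields the parameters $[2^{t-1},0,2^{t-1}]$ under the standard convention that a zero-dimensional code of length $n$ is assigned minimum distance $n$ (there being no nonzero codeword to examine).

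The main, and essentially only, obstacle is verifying that Theorem \ref{potocnik} truly applies, since the theorem is about vertex-transitive $s$-regular multigraphs over a field whose characteristic is coprime to $s$. The subtlety to flag explicitly is that arc-transitivity implies vertex-transitivity for graphs without isolated vertices (already used elsewhere in the paper), and that the prime $p$ in the theorem is $2$, which divides neither $s=3$ nor the order $2^t$ restriction on $|V(\mathcal{G})|$ except in the allowed sense that $n=2^t$ is a power of $p$. Once this bookkeeping is in place, every remaining step is a one-line consequence of the block structure of $A$ and the definition of $\mathcal{C}(\mathcal{G})$.
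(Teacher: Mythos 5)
Your proof is correct and follows essentially the same route as the paper: arc-transitivity gives vertex-transitivity, Theorem \ref{potocnik} with $p=2$, $s=3$, $n=2^t$ gives full $2$-rank of the adjacency matrix, and the identity $\mathrm{rank}_2(H)=\tfrac{1}{2}\mathrm{rank}_2(A)$ forces the dimension to be zero. Your version is slightly more careful than the paper's in checking the hypotheses of Theorem \ref{potocnik} and in spelling out the minimum-distance convention for the zero code, but the argument is the same.
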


In the case when $\mathcal G$ is a connected bipartite 3-arc-transitive CSG, the dimension of the code $\mathcal C( \mathcal G)$ is greater than zero. 
This statement follows directly from Theorem \ref{3-arc}. 

The following theorem can be found in \cite{Chilappagari}. The Gallager A algorithm is also described in that reference.

\begin{tm} \cite[Theorem 2]{Chilappagari} \label{thm-error-correction}
A column-weight-three code with Tanner graph of girth $g \ge 10$ can correct $\frac{g}{2} -1$ errors in $\frac{g}{2}$ iterations of the Gallager A algorithm.
\end{tm}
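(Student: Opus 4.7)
The plan is to exploit the tree-like local structure of the Tanner graph guaranteed by the girth assumption. Since $g \ge 10$, for every variable node $v$ the subgraph induced by all Tanner-graph vertices at distance at most $g/2 - 1$ from $v$ contains no cycle; equivalently, the depth-$\ell$ computation tree of Gallager A rooted at $v$ is a genuine tree for every $\ell \le g/2 - 1$, since any identification of nodes in this tree would create a Tanner-graph cycle of length at most $2\ell < g$.

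First I would fix the Gallager A update rule on a column-weight-three code: each variable node has exactly three check neighbors; the check-to-variable message along an edge $(c,v)$ is the XOR of the other two incoming variable messages; and the variable-to-check message along $(v,c)$ equals the common value of the other two incoming check messages when they agree and disagree with the channel value $r_v$, otherwise it equals $r_v$. Using this, I would establish a local correction lemma: if an erroneous variable $v$ has no other erroneous variable within Tanner distance two, then after one iteration all three check neighbors of $v$ transmit the correct value to $v$, so $v$ is corrected and, under the same isolation hypothesis, remains correct thereafter.

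The core induction then runs as follows. Let $E_\ell$ denote the set of erroneous variable nodes after $\ell$ iterations, with $|E_0| \le g/2 - 1$. Consider the subgraph spanned by $E_0$ together with all Tanner-graph vertices reachable from $E_0$ by paths of length at most $g/2 - 1$; the girth bound makes this subgraph a forest. Because $|E_0| < g/2$, a straightforward counting argument on this forest exhibits some $v \in E_\ell$ as a \emph{leaf error} in the sense of the local correction lemma, which removes $v$ from the error set in the next round. Hence $|E_\ell|$ strictly decreases while positive, and every initial error is eliminated within $g/2$ iterations.

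The main obstacle is ruling out that a \emph{correct} variable is flipped into an error during the process. A spurious flip at $v$ would require at least two of its three check neighbors to transmit the same wrong message; unrolling the Gallager A recursion shows that such a configuration can be traced back along two almost vertex-disjoint paths to vertices in the current error set, producing a closed walk whose length is bounded by $2\ell + O(1) < g$ whenever $|E_\ell| < g/2$, contradicting the girth assumption. Making this closed-walk argument rigorous, and verifying that the leaf-error bookkeeping survives every one of the $g/2$ iterations, is where the bulk of the technical work lies.
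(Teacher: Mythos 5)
First, a remark on provenance: the paper does not prove this statement at all --- it is quoted verbatim from \cite[Theorem 2]{Chilappagari} and used as a black box, so there is no internal proof to compare against. Your proposal is therefore an attempt to reprove an external result, and it does share one correct structural observation with the actual argument of Chilappagari et al.: since a cycle passing through $k$ variable nodes of a Tanner graph has length at least $2k$, any set of at most $g/2-1$ erroneous variables, together with the checks joining them, induces a forest, and this forest is the right object to analyse.

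However, there is a genuine gap at exactly the point you flag as the ``main obstacle,'' and your proposed resolution of it does not work. You want to show that a spurious flip of a correct variable $u$ forces a closed walk of length less than $g$. But for $u$ to flip it suffices that two of its three checks each deliver an incorrect message, and the two backward paths certifying those two incorrect messages may terminate at two \emph{distinct} erroneous variables; their union is then a path, not a closed walk, and no contradiction with the girth arises. Concretely, if $v_1$ and $v_2$ are two initial errors and $u$ is a correct variable with $v_1 - c - u - c' - v_2$ in the Tanner graph (which is perfectly compatible with girth $10$), then in the first iteration both $c$ and $c'$ send $u$ the value $\bar{u}$ and $u$ is flipped. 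So initially correct variables really can become erroneous, the set $E_\ell$ need not decrease monotonically, and your ``leaf-error'' induction collapses. (Your local correction lemma also asserts without justification that an isolated error, once corrected, ``remains correct thereafter''; in later iterations its checks receive messages from deeper in the graph, which may have been contaminated by exactly the propagation above.) A correct proof must therefore track and control the newly created errors rather than exclude them: the argument in \cite{Chilappagari} does this by following, iteration by iteration, which messages are incorrect and showing that an incorrect message at iteration $k$ can be traced back along a path of length growing with $k$ inside the forest induced by the original error set; since that forest has at most $g/2-1$ variable vertices, such paths are exhausted after $g/2$ iterations. That bookkeeping --- not a girth contradiction --- is the substance of the theorem, and it is missing from your sketch.
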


The following statement is a direct consequence of Theorem \ref{thm-error-correction}.

\begin{kor} \label{cor-error-correction}
Let $\mathcal G$ be a bipartite CSG with the girth $g \ge 10$, and let $\mathcal C( \mathcal G)$ be the LDPC code constructed from $\mathcal G$. 
Then $\mathcal C( \mathcal G)$ can correct $\frac{g}{2} -1$ errors in $\frac{g}{2}$ iterations of the Gallager A algorithm.
\end{kor}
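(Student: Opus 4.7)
The plan is to reduce directly to Theorem \ref{thm-error-correction} by verifying its two hypotheses for the code $\mathcal C(\mathcal G)$.

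First I would verify the column-weight hypothesis. By the construction given in Section \ref{section_cubic}, a parity-check matrix $H$ of $\mathcal C(\mathcal G)$ is the $n \times n$ block appearing in the adjacency matrix of the bipartite cubic graph $\mathcal G$ as in \eqref{A}. Since $\mathcal G$ is $3$-regular, each column of $H$ records exactly the three neighbours (in one part of the bipartition) of a single vertex from the other part, so every column of $H$ has Hamming weight exactly three. Thus $\mathcal C(\mathcal G)$ is a column-weight-three code.

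Next I would verify the girth hypothesis on the Tanner graph. By definition of the construction, the Tanner graph of $\mathcal C(\mathcal G)$ is $\mathcal G$ itself, so its girth equals the girth of $\mathcal G$, which is assumed to be at least $10$.

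With both hypotheses established, a direct application of Theorem \ref{thm-error-correction} yields that $\mathcal C(\mathcal G)$ corrects $\frac{g}{2} - 1$ errors in $\frac{g}{2}$ iterations of the Gallager A algorithm. There is no substantive obstacle here: the statement is a direct specialization of the cited theorem to the family of codes built in this paper, and the only content of the proof is to record that the construction automatically produces a column-weight-three code whose Tanner graph coincides with $\mathcal G$.
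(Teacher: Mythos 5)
Your proof is correct and matches the paper's treatment: the paper gives no separate argument, simply noting that the corollary is a direct consequence of Theorem \ref{thm-error-correction}, and your verification that the construction yields a column-weight-three code whose Tanner graph is $\mathcal G$ itself (hence of girth $g \ge 10$) is exactly the implicit content of that reduction.
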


\section{The variance of a syndrome weight} \label{section_variance}

A channel state information (CSI), for example the crossover probability, 
is very important for communication systems and it can be used for predicting a decoding efficiency. 
To compute a syndrome, an observed channel is converted to a binary symmetric channel (BSC) and the CSI of the original channel is derived from the estimated crossover probability of the BSC.
The estimation (performed prior to decoding) of the crossover probability based on the probability of a syndrome weight was proposed in \cite{Pacher} and \cite{Toto}. 
A general expression for the variance of a syndrome weight of an LDPC code 
(after transmission over a BSC) is given in \cite{Simos}. Obtained results were applied for LDPC codes that have constant check node degree. 

In this section we give the expression for the variance of a syndrome weight of an LDPC code  constructed from a bipartite CSG.

Let $\mathcal C$ be a binary linear code and let an $m\times n$  matrix $H$ be its parity-check matrix with row weights $r_i, \ i \in \{1,\ldots,m\}$. 
Furthermore, let $HH^T=[\lambda_{i,j}]$ be the concurrence matrix of $H$.
Suppose a codeword $c\in \mathcal C$ has been sent through a BSC with crossover probability $\rho$ and suppose that a vector $y$ has been received. 
The vector $s=y\cdot H^T$, $s=(s_1,\ldots, s_m)$,  is the syndrome of $y$. Denote by $w$ the syndrome weight, 
i.e. $w=\displaystyle \sum _{i=1}^m s_i$. \\
Let $f_t$ be the function defined by: $$f_{t}(\rho)=\frac{1-(1-2\rho)^{t}}{2}.$$
For an LDPC code whose Tanner graph is free of cycles of length four, in the case when the check nodes have the same degree $r$, 
the variance of a syndrome weight $w$ can be calculated as follows (see \cite{Simos}): 
\begin{equation} \label{Var}
Var(w)=\frac{m}{2}f_{2r}(\rho)+\frac{g_1}{2}\left(f_{2r}(\rho)-f_{2r-2}(\rho)\right),
\end{equation} 
where $\displaystyle g_1=\sum_{i\ne j}\lambda_{i,j}$.

The entries $\lambda_{i,j}, \ i \ne j$, of the concurrence matrix of a parity-check matrix $H$ of an LDPC code $\mathcal C( \mathcal G)$ are elements of the set $\{0,1\}$.  
A value $\lambda_{i,j}$ presents the number of common neighbours for $i$-th and $j$-th check node of the Tanner graph. 
Hence, $\lambda_{i,j}=1$ if the corresponding check nodes have a common neighbour, and $\lambda_{i,j}=0$ otherwise.
Using simple counting, it can be seen that each check node has a common neighbour with exactly six check nodes. Accordingly, $g_{1}=6n$. 
Using the equality (\ref{Var}), the variance of a syndrome weight can be calculated as follows:
$$Var(w)=\frac{n}{2}\left(7f_6(\rho)-6f_4(\rho) \right).$$

\section{Computational results} \label{section_results}

In this section we present information on LDPC codes constructed from the bipartite cubic symmetric graphs with less than 200 vertices.
We have used cubic symmetric graphs available at \cite{CubicGraphs} and follow the given notation. 
The parameters of the constructed LDPC codes are given in Table \ref{codes}. 
The obtained codes have low rates and good minimum distance.

\begin{center}
\begin{table}[H]
\scalebox{0.77}{
 $\begin{array}{|c|c|c|}
\hline
\begin{array}{ccc}
\textbf{\textrm{CSG}}& \textbf{\textrm{LDPC}} & \textbf{\textrm{Girth}}\\
\hline
6A & \textbf{[3,2,2]}^* & 4 \\
8A & [4,0,4] &  4\\
14A & \textbf{[7,3,4]} &  6\\
16A & [8,0,8] &  6\\
18A & \textbf{[9,2,6]}^* & 6 \\
20B & \textbf{[10,4,4]}^* & 6  \\
24A & \textbf{[12,4,6]} & 6 \\
26A & [13,0,13] & 6  \\
30A & [15,5,6] & 8 \\
32A & [16,0,16] & 6 \\
38A & [19,0,19] & 6 \\
40A & [20,4,8] & 8 \\
42A & \textbf{[21,5,10]} &6  \\
48A & \textbf{[24,6,10]} & 8 \\
50A & [25,0,25] & 6 \\
54A & \textbf{[27,2,18]}^* & 6 \\
56A & \textbf{[28,6,12]} & 6 \\
56C & [28,8,8]^* & 8 \\
62A & [31,0,31] & 6 \\
64A & [32,0,32] & 8 \\
72A & \textbf{[36,4,18]} & 6 \\
\end{array}
&
\begin{array}{ccc}
\hline
\textbf{\textrm{CSG}}& \textbf{\textrm{LDPC}} & \textbf{\textrm{Girth}}\\
\hline
74A & [37,0,37] & 6 \\
78A & \textbf{[39,2,26]}^* & 6 \\
80A & [40,4,16] & 10 \\
86A & [43,0,43] & 6 \\
90A & [45,11,10] & 10 \\
96A & [48,8,18] & 6 \\
96B & [48,12,8] & 8 \\
98A & \textbf{[49,3,28]} & 6 \\ 
98B & \textbf{[49,6,24]} & 6 \\
104A & [52,0,52] & 6 \\
110A & [55,10,10]^* & 10 \\
112A & [56,8,14] & 8 \\
112B & [56,8,16] & 8 \\
112C & \textbf{[56,6,24]} & 10 \\
114A & \textbf{[57,2,38]}^* & 6 \\
120A & [60,5,20] & 8 \\
120B & [60,4,24] & 10 \\
122A & [61,0,61] & 6 \\
126A & [63,5,30] & 6 \\
128A & [64,0,64] & 6 \\
128B & [64,0,64] & 10 \\ 
\end{array}
&
\begin{array}{ccc}
\hline
\textbf{\textrm{CSG}}& \textbf{\textrm{LDPC}} & \textbf{\textrm{Girth}}\\
\hline
134A & [67,0,67] & 6 \\ 
144A & \textbf{[72,8,32]} &  8 \\
144B & [72,6,30] & 10 \\
146A & [73,9,28] & 6 \\
150A & \textbf{[75,2,50]}^* & 6 \\
152A & [76,0,76] & 6 \\
158A & [79,0,79] & 6 \\
162A & \textbf{[81,2,54]}^* & 6 \\
162B & \textbf{[81,2,54]}^* & 12 \\
162C & [81,14,18] & 12 \\
168A & [84,10,30] & 6 \\
168E & [84,13,30] & 12 \\ 
168F & [84,8,38]^* & 12 \\
182A &  \textbf{[91,3,52]} & 6 \\
182B & \textbf{[91,3,52] }& 6 \\
182D & [91,14,26]^* & 12 \\
186A & \textbf{[93,2,62]}^* & 6 \\
192A & [96,23,8] & 8 \\
192B & [96,16,22] & 10 \\
192C & [96,18,16] & 12 \\
194A & [97,0,97] & 6 \\
\end{array}  \\
\hline
\end{array}$}
\caption{\footnotesize Parameters of LDPC codes constructed from bipartite cubic symmetric graphs with less than 200 vertices.}
\label{codes}
\end{table}
\end{center}

The girths of the Tanner graphs of the constructed codes are at least six, except for the codes obtained from the graphs $6A$ and $8A$ (the complete bipartite graph $K_{3,3}$ and the cube, respectively) for which the girth is equal to four.    
The LDPC code constructed from the graph $14A$ is isomorphic to the LDPC code obtained from the projective plane of order two,  i.e. the symmetric $2$-$(7,3,1)$ design, 
by taking the incidence matrix of the projective plane as the parity-check matrix of the code.
The codes obtained from the graphs $162A$ and $162B$ are isomorphic, as well as the codes obtained from the graphs $182A$ and $182B$. 
Some of the constructed codes, which are marked in bold, achieve upper bound for the minimum distance, i.e., these codes are optimal codes.
Further, some of the constructed codes enjoy an additional property of being an self-orthogonal or an LCD code.

A linear code $\mathcal C$ satisfying $\mathcal C\subseteq \mathcal C^\bot$, where $\mathcal C^\bot$ is the dual code of the code $\mathcal C$, is called a self-orthogonal code.
Some of the obtained codes are self-orthogonal. The codes with this property are obtained from the following graphs: 
$14A$, $30A$, $40A$, $56A$, $80A$, $90A$, $98A$, $98B$, $112B$, $112C$, $120A,$ $120B$, $144A$, $146A$ and $182A$ (and $182B$).

An LCD  code (linear code with complementary dual) is a linear code $\mathcal C$ which satisfies $\mathcal C\cap \mathcal C^\bot=\{0\}$. 
LCD codes were introduced by Massey in \cite{Massey}.  These codes have an important role in cryptography. 
Lately, there has been much interest and a lot of work has been done regarding this topic (see, e.g., \cite{Carlet, Harada, Key}).
The codes labeled with $^*$ in Table \ref{codes} are LCD codes.

The codes constructed from the graphs $6A$, $18A$, $54A$, $78A$, $114A$, $150A$, $162A$ (and $162B$), and $ 186A$ are unique LCD codes with the given parameters, 
up to equivalence (see \cite[Proposition 2.5, Theorem 4.5]{Harada}). 
According to the classification of LCD codes given in \cite{Harada}, there exists exactly five LCD codes with the parameters $[10,4,4]$. In this paper,  
an $[10,4,4]$ LCD code was obtained using the adjacency matrix of the  cubic symmetric graph $20B$. 

From Corollary \ref{cor-error-correction} it follows that the codes from the CSGs $90A$, $110A$, $112C$, $120B$, $128B$, $144B$ and $192B$ 
can correct 4 errors in 5 iterations of the Gallager A algorithm, and the
codes from the graphs $162B$, $162C$, $168E$, $168F$, $182D$ and $192C$ can correct 5 errors in 6 iterations of that algorithm.

\begin{rem}
The obtained LDPC codes have small rate. To obtain higher rate codes one can do the following. If $A$ is the parity-check matrix of an $[n,k,d]$ LDPC code $\mathcal C$ constructed from a bipartite cubic 
symmetric graph, one can use the matrix $A'= [ \ A \ | \ I_n \ ]$ as the parity-check matrix of an LDPC code $\mathcal C'$, where $I_n$ is the identity matrix of order $n$. 
The code $\mathcal C'$ is an irregular LDPC code of length $2n$ and dimension $n$, i.e. with rate equal 0.5. The minimum distance of the code is $4$, and the girth of the Tanner graph is preserved, 
i.e. the girth of the Tanner graph of the obtained code is the same as the girth of the Tanner graph of the initial code. Instead of the matrix $I_n$ one can use an $n \times l$ matrix $B$ consisting
of $l$ columns of the matrix $I_n$ to obtain an LDPC code with the parity-check matrix $[ \ A \ | \ B \ ]$ that has length $n+l$ and dimension at least $k$. The minimum distance of the code is equal to 4 for $l\geq3$.
\end{rem}

\section{Simulation results} \label{section_simulation}

As an illustration, we present simulation results of the $[24,6,10]$ and $[96,18,16]$ LDPC codes, derived from the the cubic symmetric graphs $48A$ and $192C$, on the additive white gaussian noise (AWGN) channel. We have compared the codes with randomly generated LDPC codes of the same length and dimension and a parity-check matrix with a column weight equal to three.  For randomly generated codes we have used the software for LDPC codes available at \cite{Radford}, which employs the construction from \cite{MacKay1, MacKay2}.  The codes are decoded with the sum-product decoding algorithm and the maximum number of iteration is set to 50. 
Figures \ref{fig:[24,6]} and \ref{fig:[96,18]} show the performance of the codes.
 
\begin{figure}[H]
	\begin{center}
		\includegraphics[width=85mm]{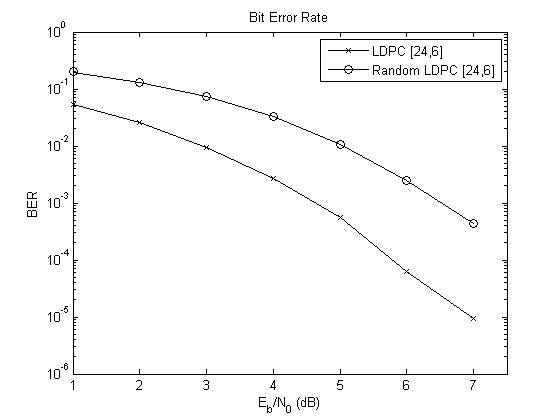}
		\caption{\small{BER performance of the $[24,6,10]$ LDPC code derived from the graph $48A$ }}
		\label{fig:[24,6]}
	\end{center}
\end{figure}

\begin{figure}[H]
	\begin{center}
		\includegraphics[width=85mm]{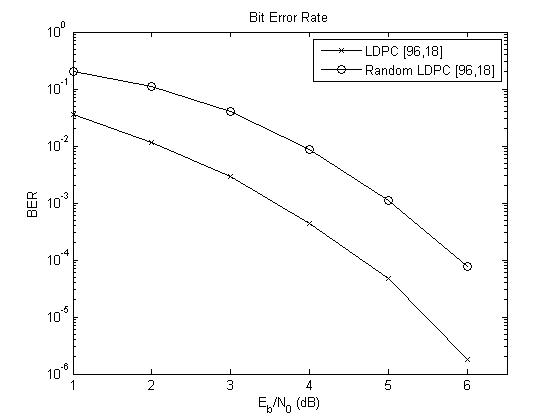}
		\caption{\small{BER performance of the $[96,18,16]$ LDPC code derived from the graph $192C$ }}
		\label{fig:[96,18]}
	\end{center}
\end{figure}

It can be seen from Figures \ref{fig:[24,6]} and \ref{fig:[96,18]}
that the LDPC codes constructed from the cubic symmetric graphs, comparing to randomly generated LDPC codes, 
have better BER performance.

%
%

\bigskip

\noindent {\bf Acknowledgement} 

This work has been fully supported by {\rm C}roatian Science Foundation under the project 6732.

\bigskip

\end{document}